\numberwithin{equation}{section}
\DeclareMathAlphabet{\pazocal}{OMS}{zplm}{m}{n}
\newcommand{\Hs}{{\bH_*}}
\newcommand{\CalD}{\mathcal{D}}
\def\eps{\varepsilon }
\newcommand\R{\mathbb R}
\def\eps{\varepsilon}
\newcommand\br{\begin{remark}}
\newcommand\er{\end{remark}}
\newcommand\bp{\begin{pmatrix}}
\newcommand\ep{\end{pmatrix}}
\newcommand{\be}{\begin{equation}}
\newcommand{\ee}{\end{equation}}
\newcommand\ba{\begin{equation}\begin{aligned}}
\newcommand\ea{\end{aligned}\end{equation}}
\newcommand{\bap}{\begin{app}}
\newcommand{\eap}{\end{app}}
\newcommand{\begs}{\begin{exams}}
\newcommand{\eegs}{\end{exams}}
\newcommand{\beg}{\begin{example}}
\newcommand{\eeg}{\end{exaplem}}
\newcommand{\bpr}{\begin{proposition}}
\newcommand{\epr}{\end{proposition}}
\newcommand{\bt}{\begin{theorem}}
\newcommand{\et}{\end{theorem}}
\newcommand{\bc}{\begin{corollary}}
\newcommand{\ec}{\end{corollary}}
\newcommand{\bl}{\begin{lemma}}
\newcommand{\el}{\end{lemma}}
\newcommand{\bd}{\begin{definition}}
\newcommand{\ed}{\end{definition}}
\newcommand{\brs}{\begin{remarks}}
\newcommand{\ers}{\end{remarks}}
\newcommand{\RR}{{\mathbb R}}
\newcommand{\CC}{{\mathbb C}}
\newcommand{\pa}{{\partial}}
\newcommand{\rmi}{{\mathrm{i}}}
\newcommand{\rmd}{{\mathrm{d}}}
\newcommand{\rms}{{\mathrm{s}}}
\newcommand{\rmu}{{\mathrm{u}}}
\newcommand{\Id}{{\rm Id }}
\newcommand{\Range}{{\rm Range }}
\newtheorem{theorem}{Theorem}[section]
\newtheorem{proposition}[theorem]{Proposition}
\newtheorem{corollary}[theorem]{Corollary}
\newtheorem{lemma}[theorem]{Lemma}
\theoremstyle{remark}
\newtheorem{remark}[theorem]{Remark}
\theoremstyle{definition}
\newtheorem{definition}[theorem]{Definition}
\newtheorem{example}[theorem]{Example}
\newcommand\cB{{\mathcal B}}
\newcommand\cD{{\mathcal D}}
\newcommand\cJ{{\mathcal J}}
\newcommand\cK{{\mathcal K}}
\newcommand\cE{{\mathcal E}}
\newcommand\cF{{\mathcal F}}
\newcommand\cM{{\mathcal M}}
\newcommand\cS{{\mathcal S}}
\newcommand\cZ{{\mathcal Z}}
\newcommand\bB{{\mathbb B}}
\newcommand\bH{{\mathbb H}}
\newcommand\bX{{\mathbb X}}
\newcommand{\ovB}{{\overline{B}}}
\newcommand{\bu}{\mathbf{u}}
\newcommand{\dom}{\text{\rm{dom}}}
\newcommand{\supp}{\text{\rm{supp}}}
\newcommand{\beq}{\begin{equation}}
\newcommand{\eeq}{\end{equation}}
\title{
Reverse norms and $L^\infty$ exponential decay for a class of degenerate evolution systems arising in kinetic theory
}
\author{ Alin Pogan}
\address{Miami University, Oxford, OH 45056}
\email{pogana@miamioh.edu}
\thanks{ A. P. research was partially supported under the
Summer Research Grant program, Miami University}
\author{Kevin Zumbrun}
\address{Indiana University, Bloomington, IN 47405}
\email{kzumbrun@indiana.edu}
\thanks{Research of K.Z. was partially supported
under NSF grant no. DMS-0300487}
\begin{document}

\begin{abstract}
We consider the question of exponential decay to equilibrium of solutions of an abstract class of degenerate evolution
equations on a Hilbert space modeling the steady Boltzmann and other kinetic equations.
Specifically, we provide conditions suitable for
construction of a stable manifold in a particular ``reverse $L^\infty$'' norm and examine when these do and do not hold.
\end{abstract}

\maketitle

\vspace{0.3cm}
\begin{minipage}[h]{0.48\textwidth}
\begin{center}
Miami University\\
Department of Mathematics\\
301 S. Patterson Ave.\\
Oxford, OH 45056, USA
\end{center}
\end{minipage}
\begin{minipage}[h]{0.48\textwidth}
\begin{center}
Indiana University \\
Department of Mathematics\\
831 E. Third St.\\
Bloomington, IN 47405, USA
\end{center}
\end{minipage}

\vspace{0.3cm}

\tableofcontents

\section{Introduction }\label{s1}
In this note, we study decay to zero for a class of degenerate evolution equations
\begin{equation}\label{Gamma}
\Gamma u'+ u = D(u,u)
\end{equation}
on a real Hilbert space $\bH$, where $D(\cdot,\cdot)$ is a bounded bilinear map on $\bH$
and $\Gamma$ is a (fixed) bounded linear operator on $\bH$ that
is {\it self-adjoint, one-to-one, but not invertible.}
As described in \cite[Introduction, Eqs. (1.5), (1.8)]{PZ1}, this is equivalent to the study
of decay to equilibria of shock and boundary layer solutions of a class of kinetic equations including
the steady Boltzmann equation.

This equation is most easily understood via spectral decomposition of $\Gamma$, converting \eqref{Gamma}
to a family of scalar equations
\be\label{scal}
(\gamma_\lambda \partial_x + 1)u_\lambda=D_\lambda(u,u),
\ee
indexed by $\lambda\in\Lambda$, where $u_\lambda$ is the coordinate of $u$ associated with spectrum $\gamma_\lambda$, real,
in the eigendecomposition of $\Gamma $, with
$\|u\|_{\bH}^2=\int_\Lambda |u_\lambda|^2d\mu_\lambda$. Here $d\mu$ denotes spectral measure associated with
$\lambda$, and the set $\{\gamma_\lambda:\lambda\in\Lambda\}$ is bounded with an accumulation point at $0$.

The linearized equations about zero, $(\gamma_\lambda \partial_x+1)u_\lambda=0$, have a stable subspace
consisting of
\be\label{stable}
u_\lambda(x)= u_\lambda(0)e^{-x/\gamma_\lambda},
\ee
for any $u(0)\in \bH$ with $u_\lambda(0)=0$ whenever $\gamma_\lambda<0$, satisfying the uniform exponential bound
\begin{equation*}
\|u(x)\|_{\bH}\leq Ce^{-x/\sup_{\lambda\in\Lambda}|\gamma_\lambda|}\|u(0)\|_{\bH}.
\end{equation*}
On the other hand, derivatives $\gamma_\lambda^{-1}e^{-x/\gamma_\lambda}$ of functions in the stable subspace,
being multiplied by the unbounded factor $\gamma_\lambda^{-1}$, may not even lie in $\bH$.
We denote as the {\it $H^1$ stable subspace} the subset of solutions in the stable subspace that
are contained in $H^1(\R_+, \bH)$, namely, those with
$$
u(0)\in \dom(\Gamma^{-1/2}).
$$
See \cite{PZ1} for further details.

In \cite{PZ1}, it was shown that there exists an $H^1$ stable {\it manifold}, consisting of all solutions of the full
equation that are sufficiently small in $H^1(\R_+,\bH)$, lying tangent to the $H^1$ stable subspace, on which solutions
decay uniformly exponentially in $H^1(\R_+,\bH)$, hence decay pointwise at rate $e^{-\beta x}$, for some $\beta>0$.
However, it was also shown that the linearized solution operator $(\Gamma \partial_x + \Id)^{-1}$, though a bounded
operator on all $L^p(\R_+,\bH)$, $1<p<\infty$, is {\it unbounded} on $L^1(\R_+,\bH)$ and $L^\infty(\R_+,\bH)$, as
a consequence of which the usual stable manifold construction fails in $L^\infty(\R_+,\bH)$.
It was cited as an interesting open problem in \cite{PZ1} whether there exists a ``full'' stable manifold, tangent to the entire
linear stable subspace.
Here, we consider a specific approach to this problem based on the introduction of a nonstandard ``reverse norm.''

\subsection{The reverse norm}\label{s:reverse}
Let us first review the standard fixed point construction of the stable manifold
(as carried out for finite dimensions in, e.g., \cite{GH,HK}) within the
context considered  in \cite{PZ1}.
Define $\chi_+(x)$ to be the cutoff function returning $1$ for $x\geq 0$ and $0$ otherwise, and
let $\Pi_\rms$ be projection onto the stable subspace of $\Gamma$ and $T_\rms(\cdot)$ the semigroup induced by \eqref{Gamma}
restricted to the stable subspace of $-\Gamma^{-1}$.
Then, solutions of \eqref{Gamma} on $\R_+$ may be expressed as
\be\label{intvar}
u = T_\rms(\cdot)\Pi_\rms u(0) + \chi_+(\Gamma \partial_x+\Id)^{-1}\big(\chi_+D_*(u,u)\big),
\ee
where $D_*:L^2(\RR_+,\bH)\times L^2(\RR,\bH)\to L^1(\RR,\bH)$ is the bilinear map defined by $D_*(u,v)=D(u(\cdot),v(\cdot))$.
Equation \eqref{intvar} can be seen as a variant of the usual variation of constants formula,
so long as the inverse $(\Gamma \partial_x+\Id)^{-1}$ is well-defined on functions of the form $\chi_+ D_*(u,u)$.
Indeed, with some elaborations,
\eqref{intvar} is used in \cite{LP2,LP3,PZ1} effectively as the definition of a mild solution of \eqref{Gamma} on $\RR_+$.

What we seek, then, is a Banach Space $\cZ$ of functions on $\R_+$ that is continuously embedded in $L^\infty(\RR,\bH)$, closed under the action of $(\Gamma\partial_x+\Id)^{-1}$ and of $D_*$, in the sense that
\begin{equation}\label{closed-D*}
D_*(v,v)\in\cZ\;\mbox{for any}\;v\in\cZ\;\mbox{and}\;\|D_*(v,v)\|_{\cZ}\lesssim\|v\|_{\cZ}^2\;\mbox{for any}\;v\in\cZ.
\end{equation}
Moreover, the space $\cZ$ should be large enough to contain the subspace of trajectories $\{T_\rms(\cdot)h:h\in\Range\,\Pi_\rms\}$.
When these properties hold, one readily sees that the fixed-point equation \eqref{intvar} is a contraction, yielding existence
and uniqueness of the stable manifold;
for details of the construction, definition of mild solution, etc., see the similar analysis of \cite{PZ1}.

To this end, we introduce the {\it reverse norm}
\be\label{reverse}
\|u\|_\Hs:= \Big(\int_{\Lambda} (\sup_{x\in\R_+} |u_\lambda(x)|)^2 d\mu_\lambda \Big)^{1/2},
\ee
where $d\mu$ denotes spectral measure associated with $\lambda$, and the space $\Hs$ of functions on $\R_+$
with finite $\|\cdot\|_\Hs$ norm.
We see readily that $(\Gamma\partial_x+\Id)$
is boundedly invertible on $\Hs$, with resolvent kernel given in $u_\lambda$ coordinates by
the scalar resolvent kernel
\be \label{R}
\hbox{\rm $R_\lambda (x,y)= \gamma_\lambda^{-1}e^{(x-y)/\gamma_\lambda^{-1}}$ whenever $(x-y)\gamma_\lambda<0$,}
\ee
and $0$ otherwise.
For, \eqref{R} is {\it integrable with respect to $x$}, hence bounded coordinate-by-coordinate with respect
to $L^\infty(\R_+)$, as therefore is the square integral $\|\cdot\|_\Hs$ of all coordinates.

The question thus reduces to: {\it ``under what conditions on $D$ is the extension $D_*$
closed with respect to $\Hs$?''} When such conditions are met, we have existence of a unique stable manifold in $\Hs$, tangent
to the full stable subspace of the bi-semigroup generated by $-\Gamma^{-1}$, answering the open question of \cite{PZ1};
see Theorem \ref{t} and Corollary \ref{c}.

\subsection{Results and counterexamples}\label{res}

We first look for a sharp abstract condition that characterizes condition \eqref{closed-D*}. For any $\alpha\in L^2(d\mu_\lambda)$ we introduce the set
\begin{equation}\label{def-calE}
\cE_\alpha=\big\{v\in\bH:|v_\lambda|\leq |\alpha_\lambda|\;\mbox{for any}\;\lambda\in\Lambda\big\}.
\end{equation}
We recall that here $(v_\lambda)_{\lambda\in\Lambda}$ denote the spectral coordinates of $v\in\bH$. Next, for any $\alpha\in L^2(d\mu_\lambda)$ we define  $\cS(\alpha)$ by
\begin{equation}\label{def-calS}
\cS_\lambda(\alpha)=\sup_{v\in\cE_\alpha}|D_\lambda(v,v)|.
\end{equation}
The following three results are established in Section~\ref{results}.
\bpr\label{abcond}
The reversed-norm space $\Hs$ is closed under the action of $D_*$ if and only if
\begin{equation}\label{bound-calS}
\|\cS(\alpha)\|_{L^2(d\mu_\lambda)}\lesssim\|\alpha\|_{L^2(d\mu_\lambda)}^2\;\mbox{for any}\;\alpha\in L^2(d\mu_\lambda).
\end{equation}
\epr
Let $D$ be expressed in terms of a kernel $\CalD:\Lambda^3\to\RR$, via
\be\label{kerD}
D_\lambda(v,w)=\int_{\Lambda^2} \CalD(\lambda, \nu,\sigma) v_\nu w_\sigma \, d\mu_\nu  \,d\mu_\sigma .
\ee
Then, two sufficient conditions are as follows.

\bpr[Hilbert-Schmidt condition]\label{fredprop}
The reversed-norm space $\Hs$ is closed under the action of $D_*$ provided
\be\label{fredeq}
\int_{\Lambda^3} |\CalD(\lambda, \nu,\sigma)|^2 \, d\mu_\lambda \, d\mu_\nu  \, d\mu_\sigma <+\infty.
\ee
\epr
\bpr[Absolute boundedness condition]\label{abprop}
The reversed-norm space $\Hs$ is closed under the action of $D_*$ provided the bilinear map $|D|$ with kernel $|\CalD(\lambda,\gamma,\sigma)|$
is bounded from $\bH\times\bH\to\bH$.
\epr
\br\label{fredrmk}
In \cite{PZ1}, the form of equation \eqref{Gamma} arose through linearization about $\bar u\in \bH$ of
\be\label{origin}
\Gamma u'=\tilde D(u,u),
\ee
$\tilde D$ a bounded bilinear map;
that is, the term $u$ on the lefthand side of \eqref{Gamma} corresponds to the relation $-2Id=D(\bar u, \cdot )$.
But, this contradicts \eqref{fredeq}, since \eqref{fredeq} together with the Cauchy-Schwarz inequality gives
$$
\|\CalD\|_{L^2(d\mu^3)} \leq \|\bar u\|_{\bH}^2 \|\tilde D\|_{\bH\to \bH}^2<\infty.
$$
Thus, the Hilbert-Schmidt criterion, though appealing, is not relevant to the problem originally considered in \cite{PZ1},
in particular not to the case of the steady Boltzmann equation.
\er
Using these results we can prove the following existence and uniqueness result.
\begin{theorem}\label{t}
	Assume that $\Hs$ is closed under the action of $D_*$ (for example, that
	$|D|$ is bounded from $\bH\times\bH$ to $\bH$, or \eqref{fredeq} is satisfied).
Then, for any integer $r\geq 1$ there exists a $C^r$ local stable manifold $\cM_\rms$ near $0$, expressible as a graph of $C^r$ function $\cJ_\rms:\bH_\rms\to\bH_\rmu$, that is locally invariant under the flow of equation $\Gamma u'+u=D(u,u)$ and uniquely determined by the property that $u\in \Hs$.
\end{theorem}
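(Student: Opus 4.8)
The plan is to realize $\cM_\rms$ as the graph of the fixed point of the Lyapunov--Perron operator associated to \eqref{intvar}, exploiting that the reverse-norm space $\Hs$ has been engineered precisely so that every ingredient of the standard construction is bounded. Write $\cK:=\chi_+(\Gamma\partial_x+\Id)^{-1}\chi_+$; by the discussion following \eqref{reverse} (integrability in $x$ of the scalar Green's function \eqref{R}, uniformly in $\lambda$) the operator $\cK$ is bounded on $\Hs$ with $\|\cK\|_{\Hs\to\Hs}\lesssim1$, and a direct computation from \eqref{stable} shows that $T_\rms(\cdot)$ embeds $\bH_\rms=\Range\,\Pi_\rms$ isometrically into $\Hs$, the supremum in \eqref{reverse} being attained at $x=0$. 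For $h\in\bH_\rms$ I would then study
\[
\cT_h(u):=T_\rms(\cdot)h+\cK\,D_*(u,u)
\]
on a small ball $B_\delta\subset\Hs$, whose fixed points are exactly the mild solutions of \eqref{Gamma} in $\Hs$ with $\Pi_\rms u(0)=h$.

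First I would record the bilinear estimate $\|D_*(u,w)\|_\Hs\lesssim\|u\|_\Hs\|w\|_\Hs$. This is the one genuinely new analytic point: the hypothesis supplies only the diagonal bound $\|D_*(v,v)\|_\Hs\lesssim\|v\|_\Hs^2$ of \eqref{closed-D*}, but since $D_*$ is bilinear, polarization of its symmetric part together with homogeneity upgrades this to the full product bound, and in particular yields the Lipschitz estimate $\|D_*(u,u)-D_*(v,v)\|_\Hs\lesssim(\|u\|_\Hs+\|v\|_\Hs)\|u-v\|_\Hs$. Combined with $\|\cK\|_{\Hs\to\Hs}\lesssim1$ and the isometric bound on $T_\rms(\cdot)$, this shows that for $\|h\|_\bH$ and $\delta$ small enough $\cT_h$ maps $B_\delta$ into itself and is a contraction there, so the contraction mapping theorem produces a unique fixed point $u^*(h)\in B_\delta$.

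For smoothness I would invoke the uniform contraction principle. Since $(u,w)\mapsto\cK D_*(u,w)$ is a bounded bilinear map on $\Hs$, the map $(h,u)\mapsto\cT_h(u)$ is polynomial of degree two in $u$ and affine in $h$, hence $C^\infty$; because $\|D_u\cT_h\|<1$ on $B_\delta$ the operator $\Id-D_u\cT_h$ is invertible, and the implicit function theorem in $\Hs$ gives that $h\mapsto u^*(h)$ is $C^\infty$, in particular $C^r$ for every $r\ge1$. I would then set $\cJ_\rms(h):=\Pi_\rmu\,u^*(h)(0)$, which is $C^r$ because evaluation at $x=0$ is bounded from $\Hs$ into $\bH$ (the embedding $\Hs\hookrightarrow L^\infty(\R_+,\bH)$) and $\Pi_\rmu$ is bounded. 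The causal structure of the scalar Green's function \eqref{R} makes the stable component of $\cK D_*(u,u)$ vanish at $x=0$, so that $\Pi_\rms u^*(h)(0)=h$ and the graph $\cM_\rms=\{h+\cJ_\rms(h):h\in\bH_\rms\}$ is the candidate manifold. Differentiating the fixed-point identity at $h=0$, where $u^*(0)\equiv0$, gives $D_hu^*(0)k=T_\rms(\cdot)k$, whence $D\cJ_\rms(0)=\Pi_\rmu|_{\bH_\rms}=0$, i.e.\ tangency to $\bH_\rms$.

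It remains to establish local invariance and the uniqueness characterization, both of which are formal consequences of uniqueness of the fixed point: for small $x_0>0$ the shifted trajectory $u^*(h)(x_0+\cdot)$ again lies in $\Hs$ and solves \eqref{Gamma}, so by uniqueness it equals $u^*(h')$ with $h'=\Pi_\rms u^*(h)(x_0)$, placing $u^*(h)(x_0)$ on $\cM_\rms$; and any mild solution $u\in\Hs$ satisfies \eqref{intvar}, hence coincides with $u^*(\Pi_\rms u(0))$ and has $u(0)\in\cM_\rms$. The main obstacle is not in this construction, which is the standard graph-transform/Lyapunov--Perron scheme of \cite{GH,HK,PZ1}, but in having the right space: all the difficulty is absorbed into the boundedness of $\cK$ on $\Hs$ and into the closedness hypothesis, the latter reduced to the verifiable criteria of Propositions \ref{fredprop} and \ref{abprop}. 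Within the proof itself the only steps requiring care are the passage from the diagonal bound \eqref{closed-D*} to the bilinear and Lipschitz estimates, and the verification that evaluation at $x=0$ interacts correctly with the stable/unstable splitting encoded in \eqref{R}.
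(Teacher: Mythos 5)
Your proposal is correct and follows essentially the same Lyapunov--Perron fixed-point scheme as the paper: a contraction argument for \eqref{intvar} in the reverse-norm space built on boundedness of $\chi_+(\Gamma\partial_x+\Id)^{-1}\chi_+$ and the closedness hypothesis \eqref{closed-D*}, the graph structure and $\Pi_\rms u(0)=h$ coming from the causal form of the kernel \eqref{def-K-kernel}, invariance via translation plus uniqueness of the fixed point, and tangency by differentiating at $h=0$. The only differences are minor: the paper runs the same contraction in the whole family of weighted spaces $\bH_{*,\beta}$, $\beta\in[0,\|\Gamma\|^{-1})$, so that the exponential decay of Corollary \ref{c} comes out of the same construction, while you work only at $\beta=0$ (which suffices for the theorem) and, usefully, make explicit the polarization-plus-homogeneity step upgrading the diagonal bound \eqref{closed-D*} to the bilinear and Lipschitz estimates that the paper's contraction estimate \eqref{Fix-point-estimate} implicitly requires.
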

Using the existence result above we obtain the following exponential decay result for solutions of equation \eqref{Gamma}:
\begin{corollary}\label{c}
Assume that $\Hs$ is closed under the action of $D_*$, and let $u^*\in\bH_*$ be a solution of equation $\Gamma u'+u=D(u,u)$.
Then, there exist $\beta\in (0,\|\Gamma\|^{-1})$ such that $u^*\in \bH_{*,\beta}$. In particular, we have that there exists $\beta>0$ such that $\|u^*(x)\|\lesssim e^{-\beta|x|}$ for any $x\in\RR_\pm$.
\end{corollary}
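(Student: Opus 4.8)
The plan is to upgrade the membership $u^*\in\Hs$ to membership in an exponentially weighted version of the reverse-norm space, and then to read off the pointwise bound. For $\beta>0$ let $\bH_{*,\beta}$ be the space carrying the weighted reverse norm
\be
\|u\|_{\bH_{*,\beta}} := \Big(\int_{\Lambda}\big(\sup_{x\in\R_+} e^{\beta|x|}|u_\lambda(x)|\big)^2\, d\mu_\lambda\Big)^{1/2},
\ee
so that $u\in\bH_{*,\beta}$ exactly when $e^{\beta|\cdot|}u\in\Hs$, and $\bH_{*,\beta}\hookrightarrow\Hs$ for every $\beta>0$. First I would repeat the contraction-mapping construction underlying Theorem~\ref{t}, but now with $\bH_{*,\beta}$ in place of $\Hs$. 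This requires re-verifying, for $\beta$ in the asserted range, the two structural inputs of that construction — boundedness of $(\Gamma\partial_x+\Id)^{-1}$ and closedness of $D_*$ — together with containment of the linear stable trajectories $\{T_\rms(\cdot)\Pi_\rms h\}$.

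For the resolvent, I would simply conjugate the explicit scalar kernel \eqref{R} by the weight: on $\R_+$ this multiplies $R_\lambda(x,y)$ by $e^{\beta(x-y)}$, which amounts to replacing the scalar rate $\gamma_\lambda^{-1}$ in \eqref{R} by the shifted rate $\gamma_\lambda^{-1}+\beta$. Because $|\gamma_\lambda|\le\|\Gamma\|$ forces $|\gamma_\lambda^{-1}|\ge\|\Gamma\|^{-1}$, the restriction $0<\beta<\|\Gamma\|^{-1}$ guarantees that $\gamma_\lambda^{-1}+\beta$ retains the sign of $\gamma_\lambda^{-1}$; integrating in $x$ exactly as in the discussion after \eqref{R} then yields the uniform coordinatewise bound $\int_{\R_+}|R^\beta_\lambda(x,y)|\,dx\le(1-\beta\|\Gamma\|)^{-1}$ for the weighted kernel $R^\beta_\lambda$. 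Hence $(\Gamma\partial_x+\Id)^{-1}$ is bounded on $\bH_{*,\beta}$, and this is precisely the step that pins the threshold $\beta<\|\Gamma\|^{-1}$.

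Closedness of $D_*$ on $\bH_{*,\beta}$ I would reduce to the already-assumed closedness on $\Hs$ using bilinearity: with the scalar $c=e^{\beta|x|/2}$ one has $e^{\beta|x|}D_\lambda(v(x),v(x))=D_\lambda(c\,v(x),c\,v(x))$, so that, setting $w:=e^{\beta|\cdot|/2}v$,
\be
\|D_*(v,v)\|_{\bH_{*,\beta}}=\|D_*(w,w)\|_{\Hs}\lesssim\|w\|_{\Hs}^2=\|v\|_{\bH_{*,\beta/2}}^2\le\|v\|_{\bH_{*,\beta}}^2,
\ee
the middle inequality being the hypothesis that $\Hs$ is closed under $D_*$. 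Containment of the linear trajectories is immediate from \eqref{stable}, since each surviving mode decays like $e^{-x/\gamma_\lambda}$ with $\gamma_\lambda^{-1}\ge\|\Gamma\|^{-1}>\beta$, so the weight $e^{\beta x}$ is absorbed. With these three facts, the fixed-point argument of Theorem~\ref{t} runs verbatim in $\bH_{*,\beta}$ and produces a locally unique small solution there.

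Finally I would identify that solution with $u^*$. Applying the $\bH_{*,\beta}$-construction with the stable data $\Pi_\rms u^*(0)$ yields a solution lying in $\bH_{*,\beta}\subset\Hs$ with the same stable data; since $u^*$ is, by the uniqueness clause of Theorem~\ref{t}, the unique small solution in $\Hs$ with that data, the two coincide and $u^*\in\bH_{*,\beta}$. Pulling the supremum out of the spectral integral then gives, for each fixed $x$,
\be
\|u^*(x)\|_{\bH}^2=\int_\Lambda |u^*_\lambda(x)|^2\,d\mu_\lambda\le e^{-2\beta|x|}\int_\Lambda\big(\sup_{y}e^{\beta|y|}|u^*_\lambda(y)|\big)^2 d\mu_\lambda=e^{-2\beta|x|}\|u^*\|_{\bH_{*,\beta}}^2,
\ee
which is the asserted bound $\|u^*(x)\|\lesssim e^{-\beta|x|}$; the $x\in\R_-$ case is identical after the reflection $x\mapsto -x$, which exchanges the roles of the $\gamma_\lambda>0$ and $\gamma_\lambda<0$ modes of the bi-semigroup generated by $-\Gamma^{-1}$. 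I expect the genuine obstacles to be not the algebra but the two uniformity/identification points: securing the resolvent bound uniformly in $\lambda$ across the whole spectrum — the delicate regime being $|\gamma_\lambda|$ near $\|\Gamma\|$, which is exactly what forces $\beta<\|\Gamma\|^{-1}$, whereas the accumulation $\gamma_\lambda\to0$ only helps — and checking that the $\Hs$-uniqueness class of Theorem~\ref{t} really does capture the $\bH_{*,\beta}$-solution, so that the identification $u^*\in\bH_{*,\beta}$ is legitimate.
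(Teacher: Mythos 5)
Your proposal is correct and follows essentially the same route as the paper: Section 2.2 of the paper runs the fixed-point construction for $\Phi(h,u)=T_\rms(\cdot)h+\chi_+K(\chi_+D_*(u,u))$ simultaneously in the weighted spaces $\bH_{*,\beta}$ for all $\beta\in[0,\|\Gamma\|^{-1})$ (with radii $\eps_1,\eps_2$ uniform in $\beta$), using exactly your three ingredients --- boundedness of $K=(\Gamma\pa_x+\Id)^{-1}$ on $\bH_{*,\beta}$ via the explicit convolution kernel \eqref{def-K-kernel}, closedness of $D_*$ in the weighted norm, and the trajectory bound \eqref{trajectory-H} --- and then the corollary follows by identifying the $\Hs$-solution with the weighted fixed point through uniqueness, just as you do. Your bilinearity trick $e^{\beta|x|}D_\lambda(v(x),v(x))=D_\lambda(e^{\beta|x|/2}v(x),e^{\beta|x|/2}v(x))$ is a nice explicit justification of the weighted closedness of $D_*$, which the paper asserts without proof.
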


It is straightforward to construct kernels $\CalD$ originating from linearization of
\eqref{origin} and satisfying the condition of Proposition~\ref{abprop} but not \eqref{fredeq}.
Hence, Proposition~\ref{abprop} gives existence of a full stable manifold in some cases relevant to the scenario
originally considered in \cite{PZ1}.
However, this condition too is not sharp. In Section~\ref{counterexamples} we give two counterexamples showing that Propositions~\ref{fredprop} and ~\ref{abprop} provide only sufficient conditions guaranteeing that $\Hs$ is closed under the action of $D_*$.  Finally, in Section \ref{discussion}, we discuss possible generalizations, and open problems, in particular
as regards the important example of the steady Boltzmann equation, our main interest in \cite{PZ1}.

\section{Results}\label{results}

In this section we prove our results stated in the introduction. First, we give necessary and sufficient conditions that guarantee that the extension $D_*$ is bounded.
Then, we sketch the proof of existence and uniqueness of a stable manifold of equation \eqref{Gamma} assuming
boundedness of $D_*$.

\subsection{Invariance of $\Hs$ under the action of $D_*$}\label{sufficient-cond}
\begin{proof}\textit{of Proposition~\ref{abcond}}. First, we assume that condition \eqref{closed-D*} holds for $\cZ=\Hs$. Fix $\alpha\in L^2(d\mu_\lambda)$. From the definition of $\cS(\alpha)$ in \eqref{def-calS} we have that for any $\lambda\in\Lambda$ there exists $v^\lambda\in\cE_\alpha\subset\bH$ such that
\begin{equation}\label{2.1-1}
\frac{1}{2}\cS_\lambda(\alpha)\leq |D_\lambda(v^\lambda,v^\lambda)|\;\mbox{for any}\;\lambda\in\Lambda.
\end{equation}
Since the linear operator $\Gamma$ is self-adjoint, its spectral decomposition $\Lambda$ is contained in $\RR$. It follows that for any $\lambda\in\Lambda$ there exist $x_\lambda\in\RR_+$ such that $x_\lambda\ne x_\nu$ for any $\lambda\ne\nu$. Next, for any $\lambda\in\Lambda$ we construct a function $w_\lambda:\RR_+\to\CC$ such that $w_\lambda(x_\nu)=v_\lambda^\nu$ for any $\nu\in\Lambda$ and $0$ otherwise. From \eqref{def-calE} we obtain that
\begin{equation}\label{2.1-2}
\sup_{x\geq 0}|w_\lambda(x)|=\sup_{\nu\in\Lambda}|v_\lambda^\nu|\leq |\alpha_\lambda|\;\mbox{for any}\;\lambda\in\Lambda.
\end{equation}
Since $(v_\lambda^\nu)_{\lambda\in\Lambda}\in L^2(d\mu_\lambda)$  we have that the function $\lambda\to v_\lambda^\nu$ is $\mu$-measurable for any $\nu\in\Lambda$, which implies that
the function $\lambda\to \sup_{x\geq 0}|w_\lambda(x)|$ is $\mu$-measurable. Since $\alpha\in L^2(d\mu_\lambda)$ from \eqref{2.1-2} we conclude that $w=(w_\lambda)_{\lambda\in\Lambda}\in\Hs$ and $\|w\|_{\Hs}\leq \|\alpha\|_{L^2(d\mu_\lambda)}$. Moreover, we have that $w(x_\lambda)=v^\lambda$ for any $\lambda\in\Lambda$. From \eqref{2.1-1} we obtain that
\begin{equation}\label{2.1-3}
\sup_{x\geq 0}|D_\lambda(w(x),w(x)|\geq |D_\lambda(v^\lambda,v^\lambda)|\geq \frac{1}{2}\cS_\lambda(\alpha)\;\mbox{for any}\;\lambda\in\Lambda.
\end{equation}
Since $\Hs$ is closed under the action of the extension $D_*$, from \eqref{closed-D*} and \eqref{2.1-3} it follows that
\begin{equation}\label{2.1-4}
\int_{\Lambda} |\cS_\lambda(\alpha)|^2d\mu_\lambda\leq 2\int_{\Lambda}\big(\sup_{x\geq 0}|D_\lambda(w(x),w(x)|\big)^2d\mu_\lambda=2\|D_*(w,w)\|_{\Hs}^2\lesssim\|w\|_{\Hs}^2\leq \|\alpha\|_{L^2(d\mu_\lambda)}^2,
\end{equation}
proving that $\cS(\alpha)\in L^2(d\mu_\lambda)$ and \eqref{bound-calS} holds true. Conversely, assume \eqref{bound-calS} and let $w=(w_\lambda)\in\Hs$. From the definition of $\Hs$ we immediately infer that $\alpha=(\alpha_\lambda)_{\lambda\in\Lambda}$ defined by $\alpha_\lambda=\sup_{x\geq 0}|w_\lambda(x)|$ belongs to $L^2(d\mu_\lambda)$. Moreover, from \eqref{def-calE} and \eqref{def-calS}, respectively, we have that $w(x)\in\cE_\alpha$ and $|D_\lambda(w(x),w(x))|\leq \cS_\lambda(\alpha)$ for any $x\geq 0$ and $\lambda\in\Lambda$. We conclude that
\begin{equation}\label{2.1-5}
\int_{\Lambda}\big(\sup_{x\geq 0}|D_\lambda(w(x),w(x)|\big)^2d\mu_\lambda\leq \int_{\Lambda} |\cS_\lambda(\alpha)|^2d\mu_\lambda\lesssim \|\alpha\|_{L^2(d\mu_\lambda)}^2=\|w\|_{\Hs}^2,
\end{equation}
proving the proposition.
\end{proof}
\begin{proof}\textit{of Proposition~\ref{fredprop}.} The result follows from Proposition~\ref{abcond} by using a simple Cauchy-Schwartz argument. Indeed, for any $\alpha\in L^2(d\mu_\lambda)$ and $v=(v_\lambda)_{\lambda\in\Lambda}\in\cE_\alpha$ we have that $\|v\|_{\bH}\leq\|\alpha\|_{L^2(d\mu_\lambda)}$, thus
\begin{align*}
|D_\lambda(v,v)|&\leq \Big(\int_{\Lambda^2} |\CalD(\lambda, \nu,\sigma)|^2\,d\mu_\nu\,d\mu_\sigma\Big)^{\frac{1}{2}}\Big(\int_{\Lambda^2} |v_\nu|^2|v_\sigma|^2 d\mu_\nu\,d\mu_\sigma\Big)^{\frac{1}{2}}\nonumber\\
&\leq \Big(\int_{\Lambda^2} |\CalD(\lambda, \nu,\sigma)|^2\,d\mu_\nu\,d\mu_\sigma\Big)^{\frac{1}{2}}\|v\|_{\bH}^2\leq \Big(\int_{\Lambda^2} |\CalD(\lambda, \nu,\sigma)|^2\,d\mu_\nu\,d\mu_\sigma\Big)^{\frac{1}{2}}\|\alpha\|_{L^2(d\mu_\lambda)}^2.
\end{align*}
It follows that
\begin{equation*}
\cS_\lambda(\alpha)\leq \Big(\int_{\Lambda^2} |\CalD(\lambda, \nu,\sigma)|^2\,d\mu_\nu\,d\mu_\sigma\Big)^{\frac{1}{2}}\|\alpha\|_{L^2(d\mu_\lambda)}^2\;\mbox{for any}\;\lambda\in\Lambda,\alpha\in L^2(d\mu_\lambda).
\end{equation*}
Integrating this inequality with respect to $\lambda\in\Lambda$ we obtain that
\begin{equation*}
\int_{\Lambda}|\cS_\lambda(\alpha)|^2d\mu_\lambda\leq \int_{\Lambda^3} |\CalD(\lambda, \nu,\sigma)|^2\,d\mu_\lambda\,d\mu_\nu\,d\mu_\sigma\|\alpha\|_{L^2(d\mu_\lambda)}^4\;\mbox{for any}\;\alpha\in L^2(d\mu_\lambda),
\end{equation*}
proving the proposition.
\end{proof}
\begin{remark}\label{r2.3}
We note that our Hilbert-Schmidt condition from \eqref{fredeq} does depend on the spectral decomposition of the self-adjoint operator $\Gamma$. Moreover, for each $\lambda\in\Lambda$ there exists $T_\lambda\in\cB(\bH)$ such that $D_\lambda(u,v)=\langle u,T_\lambda v\rangle$ for any $u,v\in\bH$. From \eqref{kerD} we immediately infer that
$\|T_\lambda\|_{\bH\to\bH}=\big(\int_{\Lambda^2}|\cD(\lambda,\nu,\sigma)|^2\,d\mu_\nu\,d\mu_\sigma\big)^{\frac{1}{2}}$ for any $\lambda\in\Lambda$. Therefore, \eqref{fredeq} is equivalent to
\begin{equation}\label{2.3-1}
\int_{\Lambda}\|T_\lambda\|_{\bH\to\bH}^2d\mu_\lambda<+\infty.
\end{equation}
Replacing the $\|\cdot\|_{\bH\to\bH}$ norm in \eqref{2.3-1} by the Hilbert-Schmidt norm we obtain an even stronger Hilbert-Schmidt condition on the bilinear map $D$
\begin{equation}\label{2.3-2}
\int_{\Lambda}\|T_\lambda\|_{\mathrm{HS}}^2d\mu_\lambda<+\infty,
\end{equation}
that can be shown to be independent of the spectral decomposition of $\Gamma$ or the choice of Hilbert bases on $\bH$.
\end{remark}
\begin{proof}\textit{of Proposition~\ref{abprop}.} Fix $v=(v_\lambda)_{\lambda\in\Lambda}\in\Hs$ and let $w\in\bH$ having spectral decomposition $(w_\lambda)_{\lambda\in\Lambda}$ where
$w_\lambda=\sup_{x\geq 0}|v_\lambda(x)|$.  One can readily check that $\|v\|_{\Hs}=\|w\|_{\bH}$. Using that $|D|$ defines a bounded bilinear map on $\bH$ we obtain that
\begin{align}\label{2.4-1}
\int_{\Lambda}\big(\sup_{x\geq 0}|D_\lambda(v(x),v(x))|\big)^2d\mu_\lambda&=\int_{\Lambda}\sup_{x\geq 0}\big|\int_{\Lambda^2} \CalD(\lambda,\nu,\sigma) v_\nu(x) v_\sigma(x)\,d\mu_\nu\,d\mu_\sigma           \big|^2d\mu_\lambda\nonumber\\
&\leq \int_{\Lambda} \Big(\int_{\Lambda^2}\sup_{x\geq 0} \big(|\CalD(\lambda,\nu,\sigma)|\,|v_\nu(x)|\, |v_\sigma(x)|\big)\,d\mu_\nu\,d\mu_\sigma\Big)^2d\mu_\lambda\nonumber\\
&=\int_{\Lambda} \Big(\int_{\Lambda^2}|\CalD(\lambda,\nu,\sigma)|\,w_\nu w_\sigma\,d\mu_\nu\,d\mu_\sigma\Big)^2d\mu_\lambda\nonumber\\
&=\int_{\Lambda}\big||D|_\lambda(w,w)\big|^2d\mu_\lambda=\big\||D|(w,w)\big\|_{\bH}^2\lesssim\|w\|_{\bH}^4=\|v\|_{\Hs}^4\nonumber\\
\end{align}
From \eqref{2.4-1} we have that $D_*(v,v)\in\Hs$ and
\begin{equation}\label{2.4-2}
\|D_*(v,v)\|_{\Hs}=\Big(\int_{\Lambda}\big(\sup_{x\geq 0}|D_\lambda(v(x),v(x))|\big)^2d\mu_\lambda\Big)^{\frac{1}{2}}\lesssim\|v\|_{\Hs}^2,
\end{equation}
proving the proposition.
\end{proof}
\subsection{Existence and uniqueness of an $\Hs$ stable manifold of equation \eqref{Gamma}}\label{stable-manifold}
Now we have all the ingredients needed to construct the stable manifold tangent at $u=0$ to the stable subspace of the linearized equation $u'=-\Gamma^{-1}u$. Throughout this subsection we assume that the space $\Hs$ is closed under the action $D_*$ in the sense of \eqref{closed-D*}.

Since $\Gamma$ is similar to the operator of multiplication by $(\gamma_\lambda)_{\lambda\in\Lambda}$ on $L^2(d\mu_\lambda)$, we can immediately infer that the stable/unstable subspace of equation $u'=-\Gamma^{-1}u$ is given by
$\bH_{\rms/\rmu}=\{h\in\bH:\supp(h_\lambda)\subseteq\Lambda_\pm\}$, where $\Lambda_\pm=\{\lambda\in\Lambda:\pm\gamma_\lambda>0\}$. Using that the linear operator $\Gamma$ is self-adjoint, one can readily check that $2\pi\rmi\omega\Gamma+I$ is invertible on $\bH$ for any $\omega\in\RR$ and $\sup_{\omega\in\RR}\|(2\pi\rmi\omega\Gamma+I)^{-1}\|<\infty$. Thus, the Fourier multiplier $K=(\Gamma\pa_x+I)^{-1}=\cF^{-1}M_{R}\cF$ is bounded on $L^2(\RR,\bH)$, where $M_R$ is the operator of multiplication on $L^2(\RR,\bH)$ by the operator valued function $R:\RR\to\cB(\bH)$ defined by $R(\omega)=2\pi\rmi\omega\Gamma+I$. Taking Fourier Transform in \eqref{Gamma} and then solving for $\cF u$ we can see that its $L^2$-solutions on $\RR_+$ satisfy \eqref{intvar}.

To solve \eqref{intvar} locally, we use a fixed point argument on a small closed ball centered at the origin in the weighted space $\bH_{*,\beta}=\{u:e^{\beta|\cdot|}u\in\Hs\}$ for $\beta\in [0,\|\Gamma\|^{-1})$. The procedure requires the following steps. First, using the representation of the stable semigroup $(T_\rms(x)h)_\lambda=e^{-\frac{x}{\gamma_\lambda}}h_\lambda$ for $x\geq 0$, $\lambda\in\Lambda$ and $h\in\bH_\rms$, we prove that $T_\rms(\cdot)h\in\bH_{*,\beta}$ and
\begin{equation}\label{trajectory-H}
\|T_\rms(\cdot)h\|_{\bH_{*,\beta}}\lesssim\|h\|_{\bH_\rms}\;\mbox{for any}\;h\in\bH_\rms.
\end{equation}
Next, we study the properties of the function $\Phi:\bH_\rms\times\bH_{*,\beta}\to\bH_{*,\beta}$ defined by $\Phi(h,u)=T_\rms(\cdot)h+\chi_+K(\chi_+D_*(u,u))$. Here we recall that $\chi_+$ is the characteristic function of $\RR_+$. To establish our results we need to prove that, provided $\Hs$ is closed under the action of $D_*$,
there exist $\eps_1>0$ and $\eps_2>0$ such that for any $\beta\in [0,\|\Gamma\|^{-1})$ the function
$\Phi$ maps $\ovB_{\bH_\rms}(0,\eps_1)\times\ovB_{\bH_{*,\beta}}(0,\eps_2)$ to $\ovB_{\bH_{*,\beta}}(0,\eps_2)$\footnote{$\ovB_\bX(0,\eps)$ denotes the closed ball in $\bX$ of radius $\eps$ centered at the origin.} and
\begin{equation}\label{Fix-point-estimate}
\|\Phi(h,u)-\Phi(h,v)\|_{\bH_{*,\beta}}\leq\frac{1}{2}\|u-v\|_{\bH_{*,\beta}}\;\mbox{for any}\;h\in\ovB_{\bH_\rms}(0,\eps_1),\,u,v\in\ovB_{\bH_{*,\beta}}(0,\eps_2).
\end{equation}
We note that for any weight $\beta\in [0,\|\Gamma\|^{-1})$ the space $\bH_{*,\beta}$ is closed under the action $D_*$ provided $\Hs$ is closed under the action of $D_*$. Therefore, to prove \eqref{Fix-point-estimate} it is enough to show that the Fourier multiplier $K$ can be extended to a bounded linear operator on $\bH_{*,\beta}$, for $\beta\in [0,\|\Gamma\|^{-1})$. This result follows by a long but fairly simple computation using the following convolution representation of $K$.
\begin{equation}\label{convolution-K}
(Kf)_\lambda(x)=\int_{\RR}\cK(x-y,\lambda)f_\lambda(y)\rmd y\;\mbox{for any}\;x\geq 0,\,\lambda\in\Lambda,\,f\in L^2(\RR_+,\bH)\cap L^\infty(\RR_+,\bH),
\end{equation}
where the kernel $\cK:\RR\times\Lambda\to\Lambda$ is given by
\begin{equation}\label{def-K-kernel}
\cK(x,\lambda)=\left\{\begin{array}{l l}
-\frac{1}{\gamma_\lambda}e^{-\frac{x}{\gamma_\lambda}} & \; \mbox{if $x>0$ and $\lambda\in\Lambda_+$ }\\
\frac{1}{\gamma_\lambda}e^{-\frac{x}{\gamma_\lambda}}  & \; \mbox{if $x<0$ and $\lambda\in\Lambda_-$ }\\
0& \; \mbox{otherwise}
\end{array} \right..
\end{equation}
Using a fixed point argument, from \eqref{Fix-point-estimate} we obtain that for any $h\in\ovB_{\bH_\rms}(0,\eps_1)$ equation $u=\Phi(h,u)$ has a \textit{unique}, local solution denoted $\bu(\cdot,h)$. Moreover, $\bu(\cdot,h)\in\ovB_{\bH_{*,\beta}}(0,\eps_2)$ depends smoothly on $h\in\ovB_{\bH_\rms}(0,\eps_1)$ in the $\bH_{*,\beta}$ norm. Using the representation \eqref{convolution-K} we have that
\begin{equation}\label{proj-sol}
\Pi_\rms\bu(0,h)=h\;\mbox{for any}\; h\in\ovB_{\bH_\rms}(0,\eps_1).
\end{equation}
Next, we introduce the stable manifold of equation \eqref{Gamma} by
\begin{equation}\label{def-Ms}
\cM_\rms=\{\bu(0,h):h\in\ovB_{\bH_\rms}(0,\eps_1)\}.
\end{equation}
From \eqref{proj-sol} we infer that $\cM_\rms=\mathrm{Graph}(\cJ_\rms)$, where $\cJ_\rms:\ovB_{\bH_\rms}(0,\eps_1)\to\bH_\rmu$ by $\cJ_\rms(h)=\Pi_\rmu\bu(0,h)$. Moreover, since $(Kf)(\cdot+x_0)=Kf(\cdot+x_0)$ for any $x_0>0$ and $f\in\bH_{*,\beta}$, by using the uniqueness of solution of equation $u=\Phi(h,u)$, we conclude that the manifold $\cM_\rms$ is invariant under the flow of equation \eqref{Gamma}. Finally, by differentiating with respect to $h$ in \eqref{intvar}, we infer that $\cJ_\rms'(0)=0$ proving that the manifold $\cM_\rms$ is tangent at $u=0$ to the stable subspace $\bH_\rms$.

\section{Counterexamples}\label{counterexamples}

In the remainder of the paper, we provide counterexamples showing that (i) boundedness of $D$ does not imply
boundedness of $|D|$ (Subsection~\ref{ceg1}) and (ii) boundedness of $D$ from $\bH\times\bH \to \bH$ does not imply $\Hs$ is closed under the action of $D_*$. (Subsection~\ref{ceg2}).
For simplicity, we work in the discrete case $L^2(d\mu_\lambda)=\ell^2$; however, the examples have obvious continuous
counterparts.

\subsection{Counterexample (i)}\label{ceg1}
Consider the matrix
\be\label{mat}
M_1(\theta):=\bp \cos \theta & \sin \theta\\ \sin \theta & -\cos \theta \ep,
\qquad 0<\theta< \frac{\pi}{2}.
\ee
This has eigenvalues $\pm 1$, yet, separating positive and negative parts
\be\label{pn}
M_1^+(\theta):=\bp \cos \theta & \sin \theta\\ \sin \theta & 0 \ep,
\qquad
M_1^+(\theta):=\bp
0&0\\0 & \cos \theta \ep,
\ee
we see that there exists $\theta_0\in (0,\frac{\pi}{4})$ such that $M_1^+(\theta_0)$ has an eigenvalue $\rho>1$, with an associated eigenvector $q=(q_1,q_2)^{\mathrm{T}}$ (by nonnegative version of Frobenius--Perron)
having nonnegative eigenvalues (indeed, calculation shows that the entries are strictly positive).

Next, we recall the following decomposition: $\ell^2=\bigoplus_{j=1}^{\infty}\RR^{2^j}$. We construct a linear operator $M(\theta)=\bigoplus_{j=1}^{\infty}M_j(\theta)\in\cB(\bigoplus_{j=1}^{\infty}\RR^{2^j})$, that is an infinite matrix, recursively, defining its upper lefthand $2^j\times 2^j$ block $M_j(\theta)$, $j\geq1$, as follows
\be\label{recursion}
M_{j+1}(\theta):=\bp \cos\theta\,M_j(\theta) & \sin\theta\,M_j(\theta)\\ \sin \theta\,M_j(\theta)& -\cos\theta\,M_j(\theta)\ep,
\ee
so that
\be\label{recursionpm}
M_{j+1}^+(\theta):=\bp \cos\theta\,M_j^+(\theta) & \sin\theta\,M_j^+(\theta)\\ \sin\theta\,M_j^+(\theta)& -\cos\theta\,M_j^-(\theta)\ep,
\;
M_{j+1}^-(\theta):=\bp \cos\theta\, M_j^-(\theta) & \sin\theta\,M_j^-(\theta)\\ \sin\theta\,M_j^-(\theta)& -\cos\theta\,M_j^+(\theta)\ep.
\ee
\begin{lemma}\label{bound-M} For any $j\geq1$ and $\theta\in (0,\frac{\pi}{2})$ the matrix $M_j(\theta)$ is an isometry on $\RR^{2^j}$, therefore the linear operator $M(\theta)=\bigoplus_{j=1}^{\infty}M_j(\theta)$ is bounded on $\ell^2$.
\end{lemma}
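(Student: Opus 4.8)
The plan is to prove by induction on $j$ that each $M_j(\theta)$ is orthogonal, i.e.\ that $M_j(\theta)^{\mathrm{T}}M_j(\theta)=I_{2^j}$, which over $\RR$ is equivalent to being an isometry. The structure to exploit is that the recursion \eqref{recursion} is exactly a Kronecker product: $M_{j+1}(\theta)=M_1(\theta)\otimes M_j(\theta)$, so that $M_j(\theta)=M_1(\theta)^{\otimes j}$. Since the Kronecker product of orthogonal matrices is orthogonal, it suffices to observe that $M_1(\theta)$ itself is orthogonal. For the base case, a direct computation gives $M_1(\theta)^{\mathrm{T}}M_1(\theta)=M_1(\theta)^2=(\cos^2\theta+\sin^2\theta)I_2=I_2$, using that $M_1(\theta)$ is symmetric.

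For the inductive step I would multiply out the block form of $M_{j+1}(\theta)^{\mathrm{T}}M_{j+1}(\theta)$. Every entry of the resulting $2\times 2$ block matrix is a scalar multiple of $M_j(\theta)^{\mathrm{T}}M_j(\theta)$: the two off-diagonal blocks each equal $(\cos\theta\sin\theta-\sin\theta\cos\theta)\,M_j(\theta)^{\mathrm{T}}M_j(\theta)=0$, while the two diagonal blocks each equal $(\cos^2\theta+\sin^2\theta)\,M_j(\theta)^{\mathrm{T}}M_j(\theta)$. Invoking the inductive hypothesis $M_j(\theta)^{\mathrm{T}}M_j(\theta)=I_{2^j}$ then yields $M_{j+1}(\theta)^{\mathrm{T}}M_{j+1}(\theta)=I_{2^{j+1}}$, completing the induction. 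Equivalently, one applies the mixed-product identity $(A\otimes B)^{\mathrm{T}}(A\otimes B)=(A^{\mathrm{T}}A)\otimes(B^{\mathrm{T}}B)$ together with orthogonality of $M_1(\theta)$.

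Having established that each block $M_j(\theta)$ is an isometry, the boundedness of $M(\theta)=\bigoplus_{j\ge1} M_j(\theta)$ is essentially immediate: for $x=(x_j)_{j\ge1}\in\ell^2=\bigoplus_{j\ge1}\RR^{2^j}$ one has $\|M(\theta)x\|_{\ell^2}^2=\sum_{j\ge1}\|M_j(\theta)x_j\|^2=\sum_{j\ge1}\|x_j\|^2=\|x\|_{\ell^2}^2$, so $M(\theta)$ is itself an isometry on $\ell^2$ of norm one. The only point worth flagging is that boundedness of an infinite orthogonal direct sum requires a uniform bound on the blocks' operator norms; here that is automatic, since every $M_j(\theta)$ has norm exactly $1$. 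Consequently there is no real obstacle in this lemma: the content is entirely in recognizing the self-similar Kronecker structure of the recursion, after which the result reduces to the stability of orthogonality under tensor products and under $\ell^2$ direct sums.
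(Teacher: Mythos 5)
Your proof is correct, and it is close in spirit to the paper's, but the packaging is genuinely different. The paper also inducts on $j$, but it works directly with the quadratic form: it expands $\|M_{k+1}(\theta)w\|^2$ for $w=(w_1,w_2)$ in block form and observes that the cross terms $\pm 2\sin\theta\cos\theta\,\langle M_k(\theta)w_1,M_k(\theta)w_2\rangle$ cancel; this is exactly the polarized version of your block computation of $M_{j+1}(\theta)^{\mathrm{T}}M_{j+1}(\theta)$, so over $\RR$ the two verifications carry the same content. What you add, and what the paper never observes, is that the recursion \eqref{recursion} is precisely the Kronecker product $M_{j+1}(\theta)=M_1(\theta)\otimes M_j(\theta)$, hence $M_j(\theta)=M_1(\theta)^{\otimes j}$. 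This reduces the lemma to orthogonality of the single $2\times2$ matrix $M_1(\theta)$ plus the mixed-product identity $(A\otimes B)^{\mathrm{T}}(A\otimes B)=(A^{\mathrm{T}}A)\otimes(B^{\mathrm{T}}B)$, and it explains structurally \emph{why} the isometry property propagates rather than verifying that it does. It would also streamline the paper's later bookkeeping in Subsection~\ref{ceg2}: the sign pattern of the rows of $T_j=M_j(\frac{\pi}{4})$ used in \eqref{Dj-1} and \eqref{Dj-k} is immediate once one recognizes $M_j(\frac{\pi}{4})$ as the $j$-fold tensor power of the normalized $2\times 2$ Hadamard matrix. Finally, your explicit remark that boundedness of the direct sum $\bigoplus_{j\geq 1}M_j(\theta)$ requires a \emph{uniform} bound on the block norms (here all equal to $1$, so $M(\theta)$ is itself an isometry) is a point the paper's proof leaves implicit; it is worth stating, since the lemma's conclusion as phrased ("therefore bounded") silently uses it.
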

\begin{proof}
A direct computation shows that $\|M_1(\theta)v\|_{\RR^2}=\|v\|_{\RR^2}$ for any $v\in\RR^2$, showing that $M_1(\theta)$ is an isometry on $\RR^2$. Assume that $M_k(\theta)$ is an isometry on $\RR^{2^k}$. Let $w=(w_1,w_2)\in\RR^{2^{k+1}}=\RR^{2^k}\times\RR^{2^k}$. From \eqref{recursion} we obtain that
\begin{align}\label{bm-1}
\|M_{k+1}&(\theta)w\|_{\RR^{2^{k+1}}}^2=\|\cos\theta\,M_k(\theta)w_1+\sin\theta\,M_k(\theta)w_2\|_{\RR^{2^k}}^2+\|\sin\theta\,M_k(\theta)w_1-\cos\theta\,M_k(\theta)w_2\|_{\RR^{2^k}}^2\nonumber\\
&=\cos^2\theta\|M_k(\theta)w_1\|_{\RR^{2^k}}^2+\sin^2\theta\|M_k(\theta)w_2\|_{\RR^{2^k}}^2+2\sin\theta\,\cos\theta\langle M_k(\theta)w_1,M_k(\theta)w_2\rangle_{\RR^{2^k}}\nonumber\\
&\quad+\sin^2\theta\|M_k(\theta)w_1\|_{\RR^{2^k}}^2+\cos^2\theta\|M_k(\theta)w_2\|_{\RR^{2^k}}^2-2\sin\theta\,\cos\theta\langle M_k(\theta)w_1,M_k(\theta)w_2\rangle_{\RR^{2^k}}\nonumber\\
&=\|M_k(\theta)w_1\|_{\RR^{2^k}}^2+\|M_k(\theta)w_2\|_{\RR^{2^k}}^2=\|w_1\|_{\RR^{2^k}}^2+\|w_2\|_{\RR^{2^k}}^2=\|w\|_{\RR^{2^{k+1}}}^2,
\end{align}
proving that $M_{k+1}(\theta)$ is an isometry on $\RR^{2^{k+1}}$.
\end{proof}
From Lemma~\ref{bound-M} we can immediately infer that the map $D:\ell^2\times\ell^2\to\ell^2$ defined by
$D(u,v):=\langle u, M(\theta_0)v\rangle_{\ell^2}\mathrm{e}_1$ is a bounded bilinear map. Here $\{\mathrm{e}_n\}_{n\geq}$ denotes the standard orthonormal Hilbert basis of $\ell^2$.
However, $|D|$ may, by recursion, be easily seen to be unbounded, by application to the
nonnegative-entry (in fact, strictly positive entry) test vectors
$u_j\in\RR^{2^j\times 2^j}$, $j\geq1$, determined recursively by
\be\label{recursionW}
u_{j+1}=\bp q_1 u_j\\q_2 u_j\ep,\;\mbox{for}\;j\geq1
\ee
with $u_1=q\in\RR^2$. We note that the following identity holds true:
\begin{equation}\label{|D|-identity}
|D|(u,v)=\langle u,|M_j(\theta_0)|v\rangle_{\RR^{2^j}}\;\mbox{for any}\;u,v\in\RR^{2^j\times 2^j}\subset\ell^2,j\geq 1.
\footnote{For a matrix $A$, we denote by $|A|:=A^++A^-$, where $A^\pm$ are the positive/negative parts of $A$.}
\end{equation}
Using\eqref{|D|-identity}, we prove an estimate that will allows to immediately conclude that $|D|$ is not bounded on $\bH=\ell^2$.
\bl\label{bd1}
For $M\in\cB(\ell^2)$ defined as in \eqref{recursion}, the following inequalities hold true:
\be\label{bd}
\langle u_j,|M_j(\theta_0)| u_j\rangle_{\RR^{2^j}} \geq \rho^j \|u_j\|_{\RR^{2^j}}^2.
\ee
\el
\begin{proof}
Estimate \eqref{bd} holds with equality for $j=1$, by construction. Assume that it holds for $j\leq k$. Computing
$$
\begin{aligned}
\langle u_{k+1},&|M_{k+1}(\theta_0)| u_{k+1}\rangle_{\RR^{2^{k+1}}}=
\bp q_1 u_k\\q_2 u_k\ep^\mathrm{T}
\bp \cos\theta\,M_k^+(\theta_0) & \sin\theta\,M_k^+(\theta_0)\\ \sin\theta\,M_k^+(\theta_0)& \cos\theta\,M_k^-(\theta_0)\ep
\bp q_1 u_k\\q_2 u_k\ep \\
&=
\langle u_k, M_k^+(\theta_0),u_k\rangle_{\RR^{2^k}} \langle q,M_1^+(\theta_0) q\rangle_{\RR^2} +
\langle u_k, M_k^-(\theta_0),u_k\rangle_{\RR^{2^k}} \langle q,M_1^-(\theta_0) q\rangle_{\RR^2},
\end{aligned}
$$
and using nonnegativity of entries to see that
$ \langle u_k, M_k^-(\theta_0),u_k\rangle_{\RR^{2^k}}\langle q,M_1^-(\theta_0)q\rangle_{\RR^2}\geq 0$, we obtain using the induction hypothesis that
\begin{align*}
\langle u_{k+1},|M_{k+1}(\theta_0)| u_{k+1}\rangle_{\RR^{2^{k+1}}}&
\geq
\langle u_k, M_k^+(\theta_0),u_k\rangle_{\RR^{2^k}} \langle q,M_1^+(\theta_0) q\rangle_{\RR^2}\\
&\geq \rho^k \|u_k\|_{\RR^{2^k}}^2 \rho \|q\|_{\RR^2}^2= \rho^{k+1}\|u_{k+1}\|_{\RR^{2^{k+1}}}^2,
\end{align*}
proving the claim.
\end{proof}

This shows it is not true that $|D|$ is bounded when $D$ is bounded, even restricted to a single coordinate.  However,
this property does not imply that $\Hs=\{f=(f_n)_{n\geq 1}:\sup_{x\geq 0}|f_n(x)|\in\ell^2\}$ is not closed in the sense of \eqref{closed-D*} under the action of
the associated extension $D_*$. It is a necessary but not sufficient condition for a counterexample to that more primary question.
And, indeed, it is easily seen that this cannot be violated by a bilinear map involving a single mode.

\subsection{Counterexample (ii)}\label{ceg2}
Finally, we give a (vectorial) counterexample showing that $D$ bounded from $\bH\times \bH\to \bH$
does not imply that $\Hs$ is closed under the action of $D_*$. Similar to the previous counterexample we take $\bH=\ell^2=\bigoplus_{j=1}^{\infty}\RR^{2^j}$. Next, we introduce the matrix $T_j:=M_j(\frac{\pi}{4})\in\RR^{2^j\times 2^j}$, $j\geq 1$, where $M_j(\theta)$ is defined recursively by \eqref{mat} and \eqref{recursion}. Let $w_j=(w_{j,k})_{1\leq k\leq 2^j}\in\RR^{2^j}$ be the vector defined by $w_{j,k}=2^{-j/2}$, $1\leq k\leq 2^j$, $j\geq 1$. A direct computation shows that
\begin{equation}\label{est-w}
\|w_j\|_{\RR^{2^j}}=1\;\mbox{for any}\;j\geq1.
\end{equation}
Moreover, from Lemma~\ref{bound-M} we have that
\begin{equation}\label{bound-Tj}
\|T_jv\|_{\RR^{2^j}}=\|v\|_{\RR^{2^j}}\;\mbox{for any}\; v\in\RR^{2^j},\,j\geq1.
\end{equation}
We define $D_j:\RR^{2^j}\times\RR^{2^j}\to\RR^{2^j}$ be the bilinear form defined by $D_j(u,v)=\langle u,w_j\rangle T_jv$. From \eqref{est-w} and \eqref{bound-Tj} we obtain that
\begin{equation}\label{Dj-est}
\|D_j(u,v)\|_{\RR^{2^j}}=|\langle u,w_j\rangle|\,\|T_jv\|_{\RR^{2^j}}\leq \|u\|_{\RR^{2^j}}\|w_j\|_{\RR^{2^j}}\|v\|_{\RR^{2^j}}=\|u\|_{\RR^{2^j}}\|v\|_{\RR^{2^j}}
\end{equation}
for any $u,v\in\RR^{2^j}$ and $j\geq 1$. It follows that the bilinear map $D:\bigoplus_{j=1}^{\infty}\RR^{2^j}\times\bigoplus_{j=1}^{\infty}\RR^{2^j}\to\bigoplus_{j=1}^{\infty}\RR^{2^j}$ defined by
\begin{equation}\label{def-D-c2}
D\big( u, v\big)=\big(D_j(u_j,v_j)\big)_{j\geq 1}\;\mbox{for}\;u=(u_j)_{j\geq 1}, v=(v_j)_{j\geq 1}\in\bigoplus_{j=1}^{\infty}\RR^{2^j}
\end{equation}
is well-defined and bounded. Indeed, from \eqref{Dj-est} and the Cauchy-Schwartz inequality we can immediately infer that $\|D(u,v)\|_{\ell^2}\leq\|u\|_{\ell^2}\|v\|_{\ell^2}$, for any $u,v\in\ell^2=\bigoplus_{j=1}^{\infty}\RR^{2^j}$.

Below we use Proposition~\ref{abcond} to prove $\Hs=\{f=(f_n)_{n\geq 1}:\sup_{x\geq 0}|f_n(x)|\in\ell^2\}$ is not closed in the sense of \eqref{closed-D*} under the action of the associated extension $D_*$. We introduce the vectors $\alpha_j=(\alpha_{j,k})_{1\leq k\leq 2^j}\in\RR^{2^j}$, $j\geq1$ by $\alpha_{j,k}=\frac{1}{j2^{j/2}}$. Thus, $\|\alpha_j\|_{\RR^{2^j}}=\frac{1}{j}$ for any $j\geq 1$, which implies that $\alpha=(\alpha_j)_{j\geq 1}\in\ell^2=\bigoplus_{j=1}^{\infty}\RR^{2^j}$ and $\|\alpha\|_{\ell^2}^2=\sum_{j=1}^{\infty}\|\alpha_j\|_{\RR^{2^j}}^2=\sum_{j=1}^{\infty}\frac{1}{j^2}<\infty$. In the case at hand we have that
\begin{equation}\label{E-alpha}
\cE_\alpha=\Big\{v=(v_j)_{j\geq 1}\in\bigoplus_{j=1}^{\infty}\RR^{2^j}:v_j=(v_{j,k})_{1\leq k\leq 2^j}\in\RR^{2^j},\;|v_{j,k}|\leq\frac{1}{j2^{j/2}},\;1\leq k\leq 2^j, j\geq 1\Big\}.
\end{equation}
Next, we denote by $D_{j,k}:\RR^{2^j}\times\RR^{2^j}\to\RR$, $1\leq k\leq 2^j$, $j\geq 1$ the components of the bilinear map $D_j$, $j\geq 1$. A crucial observation is that all the entries of $T_j$ on the first row are equal to $2^{-j/2}$ for any $j\geq 1$. It follows that
\begin{equation}\label{Dj-1}
D_{j,1}(v,v)=2^{-j}\Big(\sum_{k=1}^{2^j}v_{j,k}\Big)^2\;\mbox{for any}\;v\in\cE_\alpha.
\end{equation}
Similarly, for any $k\in\{2,\dots,2^j\}$ the $k$-th row of $T_j$ consists of $2^{j-1}$ entries equal to $2^{-j/2}$ and $2^{j-1}$ entries equal to $-2^{-j/2}$. Thus, for each $k\in\{2,\dots,2^j\}$ there exits $\sigma_k$ a permutation of the set $\{1,\dots,2^j\}$ such that
\begin{equation}\label{Dj-k}
D_{j,k}(v,v)=2^{-j}\Big(\sum_{m=1}^{2^{j-1}}v_{j,\sigma_k(m)}\Big)^2-2^{-j}\Big(\sum_{m=2^{j-1}+1}^{2^j}v_{j,\sigma_k(m)}\Big)^2\;\mbox{for any}\;v\in\cE_\alpha.
\end{equation}
From \eqref{Dj-1} and \eqref{Dj-k} we obtain that
\begin{equation}\label{sup-Djk}
\sup_{v\in\cE_\alpha}|D_{j,1}(v,v)|=\frac{1}{j^2},\;\sup_{v\in\cE_\alpha}|D_{j,k}(v,v)|=\frac{1}{4j^2}\;\mbox{for any}\;2\leq k\leq 2^j,\,j\geq1.
\end{equation}
Denoting by $\cS_j(\alpha)=(\sup_{v\in\cE_\alpha}|D_{j,k}(v,v)|)_{1\leq k\leq 2^j}\in\RR^{2^j}$, we have that
\begin{equation}\label{Sj-norm}
\|\cS_j(\alpha)\|_{\RR^{2^j}}^2=\frac{1}{j^4}+\frac{2^j-1}{16j^4}\geq\frac{2^j}{16j^4}\;\mbox{for any}\; j\geq 1,
\end{equation}
which implies that $\cS(\alpha)=(\cS_j(\alpha))_{j\geq 1}\not\in\bigoplus_{j=1}^{\infty}\RR^{2^j}=\ell^2$. From Proposition~\ref{abcond} we infer that $\Hs$ is not closed under the action of $D_*$.

\section{Discussion and open problems}\label{discussion}
There are two important differences between our analysis here in Section
\ref{stable-manifold} and the $H^1$-stable manifold construction of \cite{PZ1}.
First, at linear level, trajectories $T_s(\cdot)h\in\bH_{*,\beta}$ for any $h\in \bH_\rms$.
Therefore the condition $u(0)\in \dom(\Gamma^{-1/2})$ is not needed for membership in the stable subspace.
Second, at nonlinear level, we may express the stable manifold simply as a graph over the stable manifold,
requiring only $\Pi_\rms u\in \Hs$, whereas in \cite{PZ1} we required the more complicated implicit condition
$$
\Pi_\rms(u + D(u,u))\in \dom(\Gamma^{-1/2}) \Leftrightarrow
\Pi_\rms u' \in \dom(\Gamma^{-1/2}).
$$
This greatly simplifies the argument at the same time that it extends the results.

On the other hand, the analysis of \cite{PZ1} applied to the important case of the steady Boltzmann equation
with hard-sphere collision kernel, the main example and motivation for our investigations.
To the contrary, our Hilbert--Schmidt condition derived here does not hold for Boltzmann's equation, and
it is not at all clear how one would check that absolute boundedness condition on the kernel.
It might be that one could show boundedness of $D_*$ directly for Boltzmann's equation, however, using
the explicit structure of the collision operator (for example, the linearized collision operator may
be expressed \cite{G,Z2} as the sum of a positive real-valued multiplication operator bounded above and
below, and a compact operator $\check K$ that is readily seen to satisfy the Hilbert-Schmidt condition).
This would be a very interesting open problem to resolve.

Whether or not one can verify the bounded-$D_*$ condition, answerering the question of existence of
a ``full'' stable manifold, there remains the second question whether solutions small in $L^\infty(
\R_+,\bH)$ necessarily decay exponentially.
As noted in \cite{PZ1},
a very interesting observation due to Fedja Nazarov \cite{Na} based on
the indefinite Lyapunov functional relation $\langle u,\Gamma u\rangle'=-\|u\|_{\bH}^2+o(\|u\|_{\bH}^2)$ yields
the $L^2$-exponential decay result $e^{\beta |\cdot|}\|u(\cdot)\|\in L^2(\R_+)$ for some $\beta>0$, hence (by interpolation) in any
$L^p(\RR_+)$, $2\leq p<\infty$. However, it is not clear what happens in the critical norm $p=\infty$.
This, too, would be very interesting to resolve, either exhibiting a counterexample or proving decay.

Another approach to construction of a ``full'' stable manifold for Boltzmann's equation, as discussed,
e.g., in \cite{LiuYu,Z}, is to work in an appropriately weighted $L^\infty(\R_+,\bB)$, where $\bB$ is some weighted $L^\infty$ space in
$\xi$ (here $\xi$ denotes the independent variable of velocity, as standard for
Boltzmann's equation \cite{LiuYu,PZ1}), for which boundedness of $D_*$ would follow immediately from
boundedness of $D$, as would exponential decay of solutions merely small in $L^\infty(\R_+)$,
answering both questions posed here.
However, to date, it has not been shown that $D$ is bounded in this setting,
and it is not clear whether or not this is true; see \cite{Z}.
This is another open problem that would be very interesting to resolve.

\vspace{0.3cm}

\end{document}